\tikzstyle{hvertex}=[thick,circle,inner sep=0.cm, minimum size=2.3mm, fill=white, draw=black]
\tikzstyle{hedge}=[very thick]
\tikzstyle{rededge}=[very thick,red]
\tikzstyle{point}=[draw,circle,inner sep=0.cm, minimum size=1mm, fill=black]
\tikzstyle{pointer}=[thick,->,shorten >=2pt,color=hellgrau]
\tikzstyle{facebdry}=[color=auchblau, very thick] 
\tikzstyle{face}=[facebdry,fill=hellblau]
\tikzstyle{nface}=[color=hellblau,fill=hellblau,thick] 
\colorlet{auchblau}{blue!60!white}
\colorlet{hellblau}{blue!20!white}
\colorlet{hellrot}{red!40!white}
\colorlet{hellgrau}{black!30!white}
\colorlet{dunkelgrau}{black!60!white}
\newcommand{\slot}[2]{
\draw[ultra thick] (#1-0.5,#2-0.5) rectangle ++(1,1);
}
\tikzstyle{rline}=[ultra thick]
\def\robot{
\begin{scope}[scale=1.5]
\draw[line width=3pt,dunkelgrau, fill=white] (0,-0.5) circle [radius=0.3];
\draw[line width=10,white] (0,-0.5) -- ++(0,0.5);
\draw[rline] (-0.1,-0.5) -- ++(0,0.5);
\draw[rline] (0.1,-0.5) -- ++(0,0.5);
\draw[rline, fill=white] (0,-0.5) circle [radius=0.15];
\draw[line width=2pt] (0,0) arc [start angle=230, delta angle=-80, radius=0.3];
\draw[line width=2pt] (0,0) arc [start angle=310, delta angle=80, radius=0.3];
\draw[rline, fill=white] (0,0) circle [radius=0.1];
\end{scope}
}
\newtheorem{definition}{Definition}
\newtheorem{proposition}[definition]{Proposition}
\newtheorem{lemma}[definition]{Lemma}
\newtheorem{problem}[definition]{Problem}
\newtheorem{observation}[definition]{Observation}
\newcommand{\comment}[1]{}
\newcommand{\emtext}[1]{\text{\em #1}}
\newcommand{\prob}[1]{\textsf{#1}}
\newlength{\innerboxwidth}
\newenvironment{XXhproblem}[1]{
\smallskip\noindent
\begin{boxedminipage}{\textwidth}\vspace*{3pt}
\prob{#1}\\[6pt]
\hspace*{1.3cm}\begin{minipage}{\innerboxwidth}
\begin{itemize}\labelwidth3cm
}{
\end{itemize}\end{minipage}\vspace*{3pt}
\end{boxedminipage}
}
\newcommand{\mpty}{\sqcup}
\newcommand{\PName}{\textsc{Belt Makespan}}
\newcommand{\SR}{SR}
\newcommand{\IR}{NR}
\newcommand{\SRL}{SR+loc}
\newcommand{\IRL}{NR+loc}
\newcommand{\human}{\textsc{Human}}
\title{Case study on scheduling cyclic conveyor belts}
\author{Felix Bock and Henning Bruhn}
\date{}
\begin{document}

\maketitle

\begin{abstract}
We optimise   the production
line of a manufacturing company in southern Germany in order to improve throughput.
While the optimisation problem is NP-hard in general,
analysing production data we 
find that  
in practice the problem can be 
solved very efficiently by  aggressive generation of random machine schedules.  
\end{abstract}

\section{Introduction}
We evaluate algorithmic strategies to improve throughput in the production
line of a manufacturing company in southern Germany.%
\footnote{
To protect its trade secrets, the company does not want to be named nor does it want us 
 state what it is producing.
} 
The company has a yearly revenue in excess of \euro 50 million, more than 500 employees and 
is among the world leaders in its market segment. It is mainly focused
on a single  product, which, however, is sold in a large number of variants
that vary by colour, materials, size and other characteristics. 
Due to the similarity of the variants, most of them can be manufactured with largely the same 
production process. 

One of the first steps in this process
consists of an automatic production line that comprises
a circular conveyor belt, a number of industrial robots and a hydraulic press. 
The company reports that, especially at the end of production runs, the production line
is not always at full capacity. We investigate here how better machine scheduling leads to a more efficient
use of the production line. 

The product is made in moulds that circulate on the conveyor belt.
Whenever a mould reaches one of the industrial robots, 
the robot performs a number of steps, such as the application of certain materials. Once this is done 
the belt advances the mould to the next robot. The last step  (in this line) consists of the hydraulic press. 
When the mould leaves the press, the product is removed and  transported to other parts of
the factory, where it undergoes further production steps such as polishing and assembly.
The mould, if it is still needed, moves to the start of the line and continues circulating 
on the conveyor belt. If it is no longer needed, it will be ejected; see Figure~\ref{robofig} for 
an illustration. 

The whole production process is organised into \emph{jobs}. Each job specifies which \emph{types}
of products need to be made and in which quantities. That is, each job lists \emph{demands} per 
product type. The automatic production line runs until a job is completely finished; only then
the next job may be started. The objective is to maximise the throughput, or equivalently, to 
minimise the \emph{makespan}, the running time, of each job. 

\begin{figure}[ht]
\centering
\begin{tikzpicture}[scale=0.8]
\tikzstyle{form}=[regular polygon,regular polygon sides=8,very thick,draw,minimum size=0.7cm,color=dunkelgrau,text=black, fill=hellgrau,inner sep=0,rounded corners=1pt];
\tikzstyle{product}=[thin,circle,inner sep=0.cm, minimum size=1.5mm, fill=dunkelgrau, draw=dunkelgrau]


\foreach \i in {0,...,5}{
  \slot{0}{\i}
  \slot{5}{\i}
}

\foreach \j in {1,...,4}{
  \slot{\j}{0}
  \slot{\j}{5}
}

\draw[ultra thick] (3.5,-2) to ++(0,1.5);
\draw[ultra thick, dunkelgrau,->] (4,-2.2) to ++(0,1);

\draw[ultra thick] (4.5,-2) to ++(0,1.5);
\draw[ultra thick] (5.5,-2) to ++(0,1.5);
\draw[ultra thick, dunkelgrau,<-] (5,-2.2) to ++(0,1);

\begin{scope}[shift={(-1.3,2.5)},rotate=-140]
\robot
\end{scope}


\begin{scope}[shift={(6,1.6)}]
\draw[fill=dunkelgrau,dunkelgrau] (0.45,0.8) rectangle ++(0.3,0.2);
\draw[fill=dunkelgrau,dunkelgrau] (0.45,0.0) rectangle ++(0.3,-0.2);
\draw[fill=dunkelgrau,dunkelgrau] (0,0.5) rectangle ++(1.2,0.3);
\draw[fill=dunkelgrau,dunkelgrau] (0,0) rectangle ++(1.2,0.3);
\node[align=center] at (0.7,1.7) {hydraulic\\press};
\end{scope}

\begin{scope}[shift={(3,6.5)},rotate=-100]
\robot
\end{scope}

\draw[ultra thick, rounded corners=3pt,fill=hellgrau] (6.5,-0.5) rectangle ++(1.5,1);
\draw[ultra thick, dunkelgrau,->] (5.6,0) to (7,0);
\node[align=center] at (6.5+0.75,-1.2) {finished\\product};
\node[product] at (7.2,0.2) {};
\node[product] at (7.7,-0.2) {};
\node[product] at (7.6,0.3) {};

\node[form] at (0,1) {$A$};
\node[form] at (0,3) {$A$};
\node[form] at (0,5) {$B$};
\node[form] at (2,5) {$C$};
\node[form] at (4,5) {$B$};
\node[form] at (5,4) {$B$};
\node[form] at (5,2) {$C$};
\node[form] at (5,0) {$C$};
\node[form] at (3,0) {$C$};
\node[form] at (1,0) {$C$};

\begin{scope}[shift={(2.5,2.5)}]
\draw[ultra thick, dunkelgrau, ->] (200:1) arc [start angle=200, end angle=-130, radius=1];
\end{scope}

\node at (-1.8,4) {robot};
\node at (3.5,-2.5) {injection};
\node at (5.5,-2.5) {ejection};
\node[align=center] (M) at (-0.5,-1.5) {mould\\of type $C$};
\draw[pointer,out=0,in=-90] (M) to (1,-0.5);
\node[align=center] (B) at (6.3,6.5) {conveyor\\belt};
\draw[pointer,out=-90,in=40] (B) to (5.6,5);
\end{tikzpicture}
\caption{The set-up of the production line}\label{robofig}
\end{figure}
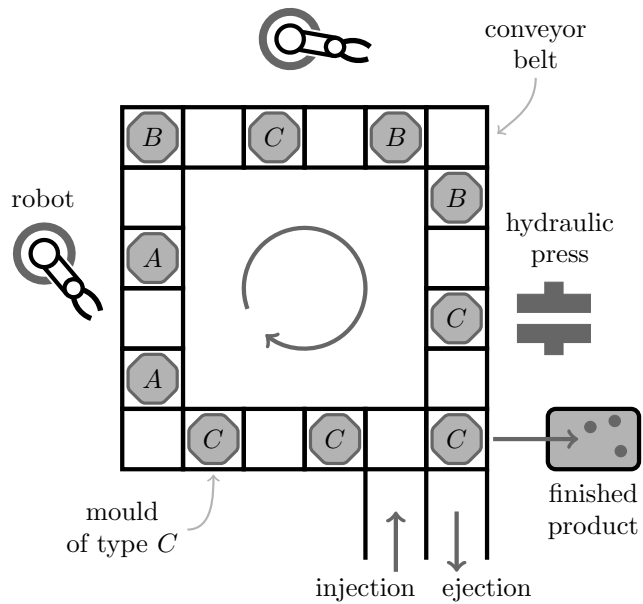

Different product types require different moulds. Of each type there may be a small number 
of moulds. That is, if a job specifies that product of type~$A$ and of type~$B$ should be 
produced then, at a given moment, perhaps three moulds of type~$A$ and two moulds of type~$B$ may
be circulating on the conveyor belt. Moulds are heavy and expensive, and consequently, the company
has always only a small number of moulds of each type, normally from one {up} to eight moulds. 
The conveyor belt is divided into a number of slots. 
Moulds may be injected at any time at the beginning of the line, provided 
the slot at the beginning is empty. Due to technical restrictions, 
injected moulds  stay
on the belt until the production of its type has reached its demand, at which stage all the 
moulds of the type are ejected. This results in empty slots on the belt that may be filled 
with {other moulds}. 

The length of the makespan of a job  depends on which moulds are placed on the belt and, 
to a somewhat lesser degree, in which order. Typically, the number of available moulds for a
job is larger than the number of slots on the belt so that not all of them can be placed
on the belt at the same time. 
In Section~\ref{modelsec}, we formulate a mathematical model, called \PName,
that captures how the placement of the moulds affects the production time of the job.
In Section~\ref{algosec}, we propose  simple and fast algorithms that we 
evaluate in Section~\ref{testsec}.

\medskip

We give a very brief overview on the relevant literature. There are some 
known types of optimisation problems that are related to \PName. 
By treating each slot on the conveyor belt as a machine, for instance, we
may view it as a parallel machine scheduling problem. \PName \ is  located somewhere between parallel machine 
scheduling with splitting jobs, in which items may be processed by all machines simultaneously, and parallel
machine scheduling with preemption, in which items may be processed by at most one machine at a time.
Both of these problems can be solved efficiently~\cite{xing2000parallel,mcnaughton1959scheduling}.
\PName , however, is different: it turns out to be NP-hard (see Section~\ref{hardsec}).

Scheduling (linear) production lines 
 are often modelled as a permutation flow shop. 
In the special case of two machines the problem can be solved optimally in polynomial time 
with Johnson's rule~\cite{johnson1954optimal}.
For a larger number of machines the problem becomes hard~\cite{lenstra1977complexity}.
A key feature of our problem is the limited availability of moulds. In the context of 
scheduling problems such type of resources is often referred to as renewable resources.
Several algorithms have been proposed for 
scheduling problems with renewable resources, such as mixed integer programming~\cite{laribi2016heuristics},
genetic algorithms~\cite{mori1997genetic} and branch-and-bound strategies~\cite{stinson1978multiple}.
None of these algorithms fits our problem setting completely.

\section{The mathematical model}\label{modelsec}\label{hardsec}

We first turn the production process into an idealised mathematical model. 
To formulate the model we will make a number of assumptions that will, in practice, 
not always be satisfied. For example, we assume that the conveyor belt  advances
with constant speed. Production data, however, indicates that the moulds advance
faster along the production line if fewer moulds are placed on the belt. 
Nevertheless, we believe that our model captures the real production process quite 
well. We will discuss the validity of the model in Section~\ref{testsec}.

Let $N$ be a positive integer, the \emph{number of slots} of the conveyor belt.
Production data indicates that $N=20$ is a reasonable estimate.
We treat $N$ as a constant since it is, in fact, constant: it is a feature of the production process. 

An \emph{instance}, or job, then consists of a finite set $\mathcal T$, the set of (product) \emph{types},
as well as positive integers $d_A$ and $c_A$ for each type $A\in\mathcal T$.
The integer $d_A$ is the \emph{demand} of type $A$ and $c_A$ is the \emph{capacity} of type $A$, i.e.\ $c_A$
is the number of moulds of type~$A$ that are available.

Let $\mpty\notin\mathcal T$ be a symbol signifying an empty slot. 
Given an instance $(\mathcal T,(d_A)_{A\in \mathcal{T}},(c_A)_{A\in \mathcal{T}})$, we model the
 usage of the conveyor belt with a function $b:\mathbb Z_{\geq 0}\to \mathcal T\cup\{\mpty\}$
that  represents the order in which the moulds are circulating on the belt.
We say that (with respect to $b$, which we usually leave out)
a \emph{mould of type~$A$ is injected in step $i$} if $b(i)=A$ but $b(i-N)\neq A$, or if $i-N<0$.
(This models the first occurence of a mould of the type.)
We say that \emph{$k$ moulds of type $A$ are used} if $k$ is the number of steps in which a mould of type~$A$
is injected.\sloppy

For an integer $n$, we write $[n]$ for the set $\{1,2,\ldots,n\}$. 
A \emph{belt assignment} is a function $b:\mathbb Z_{\geq 0}\to \mathcal T\cup\{\mpty\}$ 
such that 
\begin{enumerate}[\rm (i)]
\item $|b^{-1}(A)|=d_A$ for every $A\in\mathcal T$; 
\item  if $b(i)=A$ for some non-negative integer $i$ and $A\in\mathcal T$ then $b(i+N)=A$ or 
$|b^{-1}(A)\cap [i+N-1]|=d_A$; and
\item for every $A\in\mathcal T$ the number of moulds of type~$A$ that are used is at most~$c_A$.
\end{enumerate}
A belt assignment models the contents of the slot at the mould feed-in; condition~(i) forces
every type to be produced according to  demand; condition~(ii) states that an injected mould stays
on the conveyor belt until all of its type is produced (and will be seen every $N$ steps
in the first slot); and condition~(iii) guarantees that for every type only as many moulds are
used as physically available.

The \emph{makespan} of $b$ is the smallest integer $s$ such that $|b^{-1}(A)\cap [s-N]|=d_A$ for every $A\in\mathcal T$.
(The reason for the $-N$ in $[s-N]$ lies in the fact that the production process, i.e.\ the work
the industrial robots and the hydraulic press perform, takes $N$ steps; that is, once the last mould
is seen on the first slot, the mould will continue for another $N$ steps before the job is finished.)
Now we can formally define the optimisation problem:

\begin{problem}[\PName]\ \\
Let a set $\mathcal T$ of types as well as demands
$(d_A)_{A \in \mathcal T}$ and capacities $(c_A)_{A \in \mathcal T}$ be given.
Find a belt assignment $b:\mathbb Z_{\geq 0}\to \mathcal T\cup\{\mpty\}$ that minimises the makespan.
\end{problem}

The problem is computationally hard:
\begin{proposition}\label{npprop}
\PName\ is NP-hard if $N\geq 2$.
\end{proposition}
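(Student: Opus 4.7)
The plan is to reduce from the NP-complete \textsc{Partition} problem. Given positive integers $a_1,\ldots,a_n$ with $\sum_{i=1}^{n} a_i = 2B$, I construct a \PName{} instance with $n$ ``job'' types $A_1,\ldots,A_n$ of demand $d_{A_i}=a_i$ and capacity $c_{A_i}=1$, together with $N-2$ ``filler'' types $F_1,\ldots,F_{N-2}$ of demand $B$ and capacity $1$ (no fillers when $N=2$), and ask whether the minimum makespan is at most $NB+N$.

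The crucial observation is that setting $c=1$ forces each type to be produced by a single mould, which by condition~(ii) must occupy the same belt slot continuously for exactly $N d$ consecutive steps. A belt assignment therefore decomposes into a sequential schedule of types on each of the $N$ slots. Indexing the slots by the step $k\in\{1,\ldots,N\}$ at which they first pass the feed-in and letting $L_k$ denote the total processing time on slot $k$, the makespan is bounded below by $k+L_k$, and each $L_k$ is automatically a multiple of $N$.

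For the forward direction, given a partition $[n]=S_1\sqcup S_2$ with $\sum_{i\in S_j}a_i=B$, I schedule the fillers on slots $1,\ldots,N-2$, the jobs of $S_1$ consecutively on slot $N-1$, and those of $S_2$ on slot $N$. Each slot then carries load exactly $NB$ and a short calculation yields makespan $NB+N$. For the converse, assume the makespan is at most $NB+N$. Then $L_k\leq NB+N-k\leq NB+N-1$ for every $k\geq 1$; since $L_k$ is a multiple of $N$ this forces $L_k\leq NB$. The total load equals $N\cdot 2B+(N-2)NB=N^2 B$, so equality must hold everywhere and $L_k=NB$ for all slots. Each filler alone contributes $NB$, so the $N-2$ fillers must occupy $N-2$ distinct slots; the remaining two slots then contain only $A_i$'s whose demands sum to exactly $B$ in each slot, which is precisely a \textsc{Partition}.

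The main obstacle is less about mathematical depth than about careful bookkeeping: verifying that the constructed belt assignment indeed satisfies conditions (i)--(iii), establishing the structural lemma that a capacity-$1$ type really does occupy one slot continuously for $N d$ steps (an inductive consequence of condition~(ii)), and coping with the $N$-step lag and off-by-one effects in the definition of the makespan. Once these technicalities are settled, the reduction is polynomial and the equivalence follows immediately.
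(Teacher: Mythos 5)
Your reduction is correct and is essentially the same as the paper's: the identical construction of one capacity-$1$ type per \textsc{Partition} number plus $N-2$ capacity-$1$ filler types of demand $B=\tfrac12\sum_i a_i$, with the belt's residue classes modulo $N$ playing the role of parallel machines; you in fact supply more of the verification (the slot-decomposition lemma and the load-counting argument for the converse) than the paper, which only states the equivalence. The only discrepancy is an off-by-one in the decision threshold ($NB+N$ versus the paper's $NB+N-1$, the latter being the exact optimum when a partition exists), but either value makes the equivalence go through.
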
 

The proof is via reduction to \textsc{Partition}, a problem that is well-known to be NP-complete~\cite{garey2002computers}. 
\begin{problem}[\textsc{Partition}] \ \\
Given positive integers $d_1,d_2,\ldots,d_n$ decide
whether there is a subset $S\subseteq [n]$ such that $\sum_{i \in S} d_i = \sum_{i \in [n]\setminus S} d_i$.
\end{problem}

\begin{proof}[{Proof of Proposition~\ref{npprop}}]
We reduce \textsc{Partition} to \PName \  by
\begin{itemize}
\item  creating a type $i$ with demand $d_i$ and capacity $c_i=1$ for each number $d_i$ of \textsc{Partition}, and
\item adding $N-2$ types of demand $\frac{1}{2} \sum_{i \in [n]} d_i$ and capacity $1$.
(If $\sum_{i \in [n]} d_i$ is not even, the \textsc{Partition} instance is a \textsc{no}-instance anyway.)
\end{itemize}
Then, there is a belt assignment with makespan $N \cdot \frac{1}{2} \sum_{i \in [n]} d_i + (N-1)$ if and only if there is a set $S\subseteq [n]$ such that $\sum_{i \in S} d_i = \sum_{i \in [n]\setminus S} d_i$. 
\end{proof}

For later use, we prove a lower bound on the makespan.

\begin{lemma}
\label{lem:lowerBound}
Let $(\mathcal T,(d_A)_{A\in\mathcal T},(c_A)_{A\in\mathcal T})$ be an instance 
of  \PName\ and let $M^*$ be its optimal makespan.
Then  
\[
M^* \geq \max\left\{ \ \sum_{A \in \mathcal{T}} d_A , \,  (r^* -1) N + \sum_{A \in \mathcal{T}} \sigma_A \ \right\}+N-1,
\]
 where
$r^* := \max_{A \in \mathcal{T}}\Bigl\{\bigl\lceil \frac{d_A}{c_A} \bigr\rceil\Bigr\}$ 
and 
$\sigma_A := \max \left\{ 0,\, d_A - (r^*-1)c_A \right\} .$
\end{lemma}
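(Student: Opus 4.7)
The plan is to prove both inequalities inside the maximum separately. Fix any belt assignment $b$ with makespan $s$; by condition~(i) and the definition of the makespan, all occurrences of every type lie in $[s-N]$, i.e.\ $b^{-1}(A)\subseteq[s-N]$ for every $A\in\mathcal T$. For the first bound I would just sum over types: $\sum_A d_A = \sum_A |b^{-1}(A)| \le |[s-N]| = s - N$, which yields $s \ge \sum_A d_A + N$, slightly stronger than required.

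The engine for the second bound is a \emph{window lemma}: for any integer $k \ge 0$, any window $W$ of $kN$ consecutive non-negative integers, and any type $A$, one has $|b^{-1}(A) \cap W| \le k\, c_A$. The reason is that condition~(ii), once unpacked, forces the appearances of each individual mould to be exactly $N$ apart throughout its active life, so any single mould contributes at most once per length-$N$ sub-block of $W$; partitioning $W$ into $k$ such sub-blocks and summing over the at most $c_A$ moulds of type $A$ gives the bound.

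I would then apply the window lemma to $W = [1,(r^*-1)N]$. First I need to verify $W \subseteq [s-N]$: otherwise the $d_{A^*}$ occurrences of any type $A^*$ attaining the maximum in the definition of $r^*$ all fit inside $W$, forcing $d_{A^*} \le (r^*-1)c_{A^*}$ and contradicting $r^* = \lceil d_{A^*}/c_{A^*}\rceil$. With $W \subseteq [s-N]$ in hand, the window lemma gives $|b^{-1}(A)\setminus W| \ge d_A - (r^*-1)c_A \ge \sigma_A$ (the second inequality being trivial when $d_A \le (r^*-1)c_A$); summing over $A$ and comparing with $|[s-N]\setminus W| = s - r^*N$ yields $s \ge r^*N + \sum_A \sigma_A$, which is even slightly stronger than the stated bound.

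I expect the only delicate point to be the window lemma itself: turning the local condition~(ii) into the global statement that each individual mould's appearances are $N$-periodic throughout its active life, and then bookkeeping the partition of $W$ into length-$N$ sub-blocks.
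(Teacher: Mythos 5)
Your proof is correct and follows essentially the same route as the paper's: the paper also splits the bound into the two terms of the maximum, partitions the belt into aligned rounds of length $N$, asserts (without your explicit chain decomposition) that each round contains at most $c_A$ moulds of type $A$, and charges the residual demand $\sigma_A$ to the steps after the first $r^*-1$ rounds. Your ``window lemma'' is just that round-capacity observation made rigorous for general windows, and the extra $+1$ you obtain in both bounds is an artifact of reading $[s-N]$ literally as $\{1,\dots,s-N\}$ while the belt is indexed from $0$; either way your conclusion implies the stated inequality.
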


\begin{proof}
Let $b$ be a belt assignment.
We can split the statement into two observations.

First, all of the items have to be produced. 
This requires $\sum_{A \in \mathcal{T}} d_A$ many steps $i$ with $b(i) \neq \mpty$. 
Together with the final $N-1$ steps, we get  $M^*\geq \ \sum_{A \in \mathcal{T}} d_A+N-1$.

Second, for non-negative $r$, let us call the subsequence 
\[
\Bigl( b\bigl(rN\bigr)\ ,\ b\bigl(rN+1\bigr)\ ,\ \ldots\ , \ b\bigl(rN+N-1\bigr) \Bigr)
\]
of $b$ the \emph{$(r+1)$-th round} of $b$.  
This corresponds to the moulds that are on the conveyor belt on its $(r+1)$-th rotation.
Every round of $b$ contains at most $c_A$ many moulds of any type $A$.
Thus, to satisfy the demand of $A$, at least $\lceil \frac{d_A}{c_A} \rceil-1$ full rounds
are needed, plus possibly part of round $\lceil \frac{d_A}{c_A} \rceil$. 
This already implies $M\geq (r^*-1)N+(N-1)$. 

Now, during the first $r^*-1$ many rounds at most $ (r^*-1)c_A $ many products of type~$A$
are made, which means that after the first $r^*-1$ many rounds, there is a remaining
demand of $\sigma_A$. Therefore, $M^* \geq (r^* -1) N + \sum_{A\in \mathcal{T}} \sigma_A + N-1$. 
\end{proof}

\bigskip

From now on we fix an instance $(\mathcal T,(d_A),(c_A))$ of \PName.
Note that we may assume that $c_A \leq N$ since at most $N$ moulds of type $A$ can be used anyway.
For a belt assignment $b$, we call a non-negative integer $e$ an \emph{empty slot} if $b(e)=\mpty$. 
The  key to  \PName\  is  an efficient encoding of belt assignments.
For this, we make a number of observations.

\begin{observation}\label{obs1}
If $b$ is a belt assignment with makespan $M$ then there is a belt assignment $b'$ with makespan 
at most $M$ with the following property:
\begin{equation}\label{econom}
\text{
If a mould is injected in  step $i$ and if $e$ is an empty slot then $e>i$. 
}
\end{equation}
\end{observation}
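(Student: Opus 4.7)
The strategy is an extremal/exchange argument. Among all belt assignments with makespan at most $M$, I would consider one $b'$ that minimises lexicographically the ordered sequence of its injection steps. The claim is that such an extremal $b'$ automatically satisfies \eqref{econom}, which I would establish by contradiction: any violation can be locally resolved by producing a belt assignment with makespan at most $M$ and a lexicographically smaller injection sequence.

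To set up the contradiction, suppose $b'$ violates \eqref{econom} and pick the violation canonically: let $i$ be the smallest injection step of $b'$ that is preceded by some empty slot, and let $e$ be the largest empty slot with $e<i$. Minimality of $i$ forces every injection at a step below $i$ to sit in a prefix free of empty slots, and together with maximality of $e$ it implies that the open interval $(e,i)$ contains neither empty slots nor injections: any empty slot there would contradict maximality of $e$, while any injection $j\in(e,i)$ would be a smaller injection step preceded by the empty slot $e$, contradicting minimality of $i$. Hence every $j\in(e,i)$ is a continuation, i.e.\ $b'(j)=b'(j-N)\neq\mpty$, and in particular $j\geq N$. As a consequence, $i-e\leq N$: otherwise $e+N$ would lie in $(e,i)$ and be a continuation with $b'(e+N)=b'(e)=\mpty$, contradicting $b'(e+N)\neq\mpty$.

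I would then construct $\tilde b$ from $b'$ by moving the injection of $A:=b'(i)$ from step $i$ to step $e$: set $\tilde b(e):=A$, and delete one occurrence of $A$ near the tail of $A$'s schedule (for instance, at the largest-index position where $A$ appears in its final round) so that $|\tilde b^{-1}(A)|=d_A$ is preserved. No non-empty step is pushed later, so the makespan of $\tilde b$ does not exceed $M$; its injection-time sequence is lexicographically strictly smaller than that of $b'$, contradicting extremality. The main obstacle is verifying condition~(ii) after the modification: the new mould at position $e\bmod N$ must persist consistently round by round, and the tail deletion must respect the ejection rule for the remaining moulds of type $A$. The tight structural control over $(e,i)$---no empties and no injections, together with $i-e\leq N$---is what makes this verification go through, via a brief case analysis depending on whether $e\equiv i\pmod N$ or not.
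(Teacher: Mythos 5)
Your overall strategy (extremal choice plus a local exchange) matches the paper's in spirit, but your exchange step has a genuine gap in the case $e\not\equiv i\pmod N$, and the ``brief case analysis'' you defer to does not exist in the form you suggest. If you only set $\tilde b(e):=A$ and delete one tail occurrence of $A$, then the occurrence at $e$ is a \emph{new} injection (one checks $b'(e-N)\neq A$, since otherwise condition~(ii) for $b'$ would force $A$'s demand to be met before $e$, contradicting the injection at $i>e$). Condition~(ii) then demands $\tilde b(e+N)=A$ or that $A$'s demand is already met by step $e+N-1$; but $\tilde b(e+N)=b'(e+N)$, which can perfectly well be $\mpty$ or an injection of a different type $B$ (your structural control only covers the open interval $(e,i)$, and $e+N\geq i$ can lie outside it). Moreover, since $i-e<N$ here, the old injection at $i$ remains an injection in $\tilde b$, so type $A$ now uses one more mould than before, which can violate the capacity condition~(iii). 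So the single-cell move is not a valid belt assignment in this case, and no choice of which tail occurrence to delete repairs it.

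The fix is to organise the exchange around the \emph{column} (residue class) of $e$ rather than around the offending injection step $i$, which is what the paper does. Either some injection occurs at a step $e+\ell N$ with $\ell\geq 1$: then take the least such step, and slide that mould down its own column by filling $e, e+N,\dots,e+(\ell-1)N$ with its type and trimming $\ell$ tail appearances --- condition~(ii) is automatic because the filled cells and the old occurrences form one contiguous column. Or no injection ever occurs in that column above $e$: then the entire column $e,e+N,e+2N,\dots$ is empty, and one transplants \emph{all} appearances of some later-injected mould into that column. Your congruent subcase $i=e+N$ is exactly the first alternative with $\ell=1$, so that part of your argument is sound; but the non-congruent case needs one of these column-wide moves, not the one-cell relocation you describe. (Your extremal quantity --- lexicographic minimality of the injection steps --- would still decrease under these corrected moves, just as the paper's ``earliest empty slot is maximal'' does, so the induction itself is fine once the exchange is repaired.)
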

\begin{proof}
Suppose that the statement is not true and choose $b$  among
all $b$ such that there is no $b'$ as in the statement so that the earliest empty slot $e$ is as
late as possible, i.e.\ so that $e$ is maximal.

Suppose first that there is an $i>e$ such that a mould is injected in step $i$ and 
such that $i=e+\ell N$, for some positive integer $\ell$.
Let $i^*$ be chosen minimal with respect to this property and let $A^*=b(i^*)$. 
Define a new belt assignment $b'$ by starting with
\[
b''(k)=\begin{cases}
A^* & \textrm{if $k=e+r N$ for $r=0,1,\ldots, \ell-1$}\\
b(k) & \textrm{else}
\end{cases}
\]
and removing the last $\ell$ appearances of type $A^*$ until there are only $d_{A^*}$ many left. 
By construction, $b'$ is  a  belt assignment with a makespan no greater than that of $b$,  
 and because $b'(e) \neq \mpty$  its earliest empty slot is 
strictly later than $e$.

Suppose now that no injection occurs in a step $e+\ell N$, for positive $\ell$. That means that 
all $e,e+N,e+2N,\ldots$ are empty slots.
Pick an $i^*>e$, in which a mould is injected, and let $A^*=b(i^*)$. Define $b'$ by starting with
\[
b''(k)=\begin{cases}
A^*&\textrm{if $k=e+\ell N$ for $\ell=0,1,\ldots, d_{A^*}$}\\
\mpty & \textrm{if $k=i^*+\ell N$ for $\ell=0,1,\ldots, d_{A^*}$ and $b(k)=A^*$}\\
b(k)&\textrm{else}
\end{cases}
\]
and removing the last appearances of type $A^*$ until there are only $d_{A^*}$ many left. 
By construction, $b'$ is again a  belt assignment and  its earliest empty slot is strictly 
later than $e$.
\end{proof}

\begin{observation}\label{obs2}
If $b$ is a belt assignment with makespan $M$ then there is a belt assignment $b'$ with makespan 
at most $M$ with property \eqref{econom} and the additional property:
\begin{equation}\label{eq:moreMoulds}
\begin{minipage}[c]{0.8\textwidth}
\textrm{If $e$ is an empty slot and $A \in \mathcal{T}$ is a type for 
which in $b'$ strictly fewer than $c_A$  moulds are used,
then $b'(i) \neq A$  for all $i \geq e$.   
}
\end{minipage}\ignorespacesafterend 
\end{equation} 
\end{observation}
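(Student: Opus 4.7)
The plan is to follow the extremality-plus-swap template of Observation~1, extended to handle the extra property. By Observation~1 we may assume $b$ already satisfies \eqref{econom}, and among all such belt assignments with makespan at most $M$ I would choose one whose earliest empty slot $e$ is as large as possible. Suppose \eqref{eq:moreMoulds} fails: some type $A$ uses $k < c_A$ moulds in $b$ and has an occurrence at some position $\geq e$. Property \eqref{econom} forbids any injection at a position $\geq e$, so induction on $\ell$ yields $b(e+\ell N) = \mpty$ for every $\ell \geq 0$; in particular the residue class $e \bmod N$ is used by no $A$-mould and is available to host a new one.

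The core of the construction is to add an extra $A$-mould injected at $e$ and compensate by truncating the existing $A$-runs. Let $i_1<\cdots<i_k<e$ be the $A$-injections of $b$ with run lengths $t_1,\ldots,t_k$ summing to $d_A$, and let $L \geq e$ be the largest $A$-position. I would parameterise by a target maximum $L' \in \{e, e+1, \ldots, L\}$ and define
\[
t_j'(L') \;=\; \min\bigl\{\,t_j,\ \lfloor (L'-i_j)/N \rfloor + 1\,\bigr\},
\qquad
s(L') \;=\; \lfloor (L'-e)/N \rfloor + 1,
\]
so that the associated run maxima $i_j + (t_j'(L') - 1)N$ and $e + (s(L')-1)N$ all lie in the window $[L'-N+1,\,L']$, which is exactly what condition~(ii) requires once $L'$ is the overall maximum of $A$-positions. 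Since the residues $i_1 \bmod N,\ldots,i_k \bmod N$ and $e \bmod N$ are pairwise distinct, the total $f(L') := \sum_j t_j'(L') + s(L')$ increases by at most~$1$ when $L'$ increases by~$1$. Direct computation gives $f(L) = d_A + s(L) > d_A$ and $f(e) = a_{<e} + 1$, where $a_{<e}$ denotes the number of $A$-positions of $b$ strictly below~$e$; the violation assumption forces $a_{<e} < d_A$, hence $f(e) \leq d_A$. Consequently $f$ attains the value $d_A$ at some integer $L^\ast \in [e,L]$.

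The desired $b'$ is obtained from $b$ by setting $b'(e+rN) = A$ for $r = 0,\ldots,s(L^\ast)-1$, setting $b'(i_j + rN) = \mpty$ for $r = t_j'(L^\ast), \ldots, t_j - 1$, and leaving all other positions unchanged. Then $b'$ has exactly $d_A$ positions of~$A$, all of which lie in the width-$N$ window $[L^\ast-N+1,L^\ast]$; condition~(ii) therefore holds for $A$, the other types are untouched, and since $L^\ast \leq L$ the makespan does not increase. The smallest touched position is $\geq e+1$, so positions below $e$ remain non-empty; combined with $b'(e) = A$, this pushes the earliest empty slot of $b'$ strictly beyond $e$. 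Every injection of $b'$ is either an original $i_j<e$ or the new one at $e$, so all lie below the new first empty slot and \eqref{econom} is preserved. This contradicts the extremal choice of $b$.

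The main obstacle is the $f$-argument: one must carefully verify that the residues of $i_1,\ldots,i_k,e$ are pairwise distinct, so that no two of the summands $t_j'(L')$, $s(L')$ can jump simultaneously and the intermediate value $d_A$ is in fact attained by $f$ at an integer in $[e,L]$; the distinctness itself uses the empty-slot structure $b(e + \ell N) = \mpty$ enforced by \eqref{econom}.
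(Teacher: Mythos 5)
Your proposal is correct and follows essentially the same route as the paper: assume \eqref{econom} via Observation~\ref{obs1}, take an extremal $b$ maximising the earliest empty slot $e$, inject an additional mould of the violating type $A$ at $e$, and rebalance the occurrences of $A$ so that exactly $d_A$ remain, contradicting extremality. The only difference is one of rigour: where the paper simply says ``remove the last appearances of type $A$ until there are only $d_A$ many left,'' you make the trimming explicit via the truncation parameter $L^\ast$ and the discrete intermediate-value argument on $f$, which is a legitimate (and arguably needed) elaboration of the same step rather than a different approach.
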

What this means: If the demand of type~$A$ is not yet satisfied and an empty slot
comes up, then we may put a mould onto the belt, provided there is still one available.
\begin{proof}
Suppose the statement to be false. By Observation~\ref{obs1}, we may assume $b$ to satisfy~\eqref{econom}. 
Among all $b$ with~\eqref{econom} that violate~\eqref{eq:moreMoulds} choose one in which 
the earliest empty slot $e$ is as
late as possible, i.e.\ so that $e$ is maximal.

Let $A$ be a type for which \eqref{eq:moreMoulds} is false.
Note that~\eqref{econom} implies that empty slots stay empty, that is, $b(e)=\mpty$ implies
$b(e+\ell N)=\mpty$ for all integers $\ell\geq 0$. 
Again, define $b'$ by starting with
\[
b''(k)=\begin{cases}
A&\textrm{if $k=e+\ell N$ for $\ell=0,1,\ldots,d_A$}\\
b(k)&\textrm{else}
\end{cases}
\]
and removing the last appearances of type $A$ until there are only $d_{A}$ many left. 
By construction, $b'$ uses exactly one more mould of type $A$ than $b$ but still at most~$c_A$. 
In $b$ all moulds were injected before step $e$, by~\eqref{econom}. Because of that
in $b'$ the last mould is inserted in step $e$ and there is no empty slot before it. Thus, property \eqref{econom} still holds.
Moreover, the makespan of $b'$ is as most $M$ and it has no empty slot among the first $e$ slots. This contradicts the choice of $b$.
\end{proof}

In view of Observations~\ref{obs1} and~\ref{obs2},
we may restrict ourselves to belt assignments that satisfy \eqref{econom} and \eqref{eq:moreMoulds}.
Such assignments  allow an efficient encoding.
Given a belt assignment $b$ with~\eqref{econom} and~\eqref{eq:moreMoulds}, 
let $i_1<i_2<\ldots<i_D$ be the steps in which some mould is injected. 
Then the \emph{short injection sequence} of $b$ is $s=(b(i_1),b(i_2),\ldots,b(i_D))$. 

Given a short injection sequence $s$ of a belt assignment $b$, the belt assignment can be reconstructed by iteratively
placing the next mould of $s$ on the first empty slot of $b$.
This can be done with the following procedure:

\begin{algorithm}[htb]
\emtext{input:} injection sequence $s$
\begin{algorithmic}[1]
\State set $b(i)= \mpty$ for every $i\in \mathbb{Z}_{\geq 0}$
\State set $i=0$, $j=0$
\While{total demand not sat'd, i.e.\ $\sum_{A}|b^{-1}(A)|<\sum_Ad_A$}
\State if $i\geq N$  set $A=b(i-N)$, and $A=\mpty$ otherwise
\While{$A=\mpty$ or $|b^{-1}(A)|=d_A$}
\If{$j >$ length($s$)}
\State exit loop with $A=\mpty$.
\Else
\State set $A=s(j)$ and $j=j+1$
\EndIf
\EndWhile
\State set $b(i)=A$ and $i=i+1$
\EndWhile
\State return $b$
\end{algorithmic} \caption{(short) injection sequence to belt assignment} \label{algo:beltAssignment}
\end{algorithm}

Short injection sequences, however, are not easy to generately randomly. We therefore generalise the notion
to \emph{injection sequences}: any sequences of  types in $\mathcal{T}$ that contains 
every type $A$ (think mould)
exactly $c_A$ times. The procedure above also turns (non-short) injection sequences 
into belt assignments. 
The difference lies in the fact
that injection sequences may contain unnecessary moulds, moulds that can be omitted without changing
the belt assignment. 

The main point about injection sequences is: they
are easy to generate (just take a permutation of the sequence with all moulds) and still they encode all 
relevant belt assignments.

\begin{observation}
\label{obs:correspondingBeltAssignments}
Let $b$ be a belt assignment. Then  $b$ satisfies \eqref{econom} and \eqref{eq:moreMoulds} if and only if there is 
an injection sequence $s$ for which $b$ is the corresponding belt assignment.
\end{observation}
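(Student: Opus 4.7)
I would prove the two directions separately. For the \emph{if}-direction, let $s$ be an arbitrary injection sequence and $b$ the belt assignment produced by Algorithm~\ref{algo:beltAssignment}. An empty slot $b(i)=\mpty$ can only be created once the inner while-loop exits with $A=\mpty$, which in turn requires $s$ to be exhausted; since every injection consumes a fresh entry of $s$, all injections must lie strictly before the first empty slot, which gives \eqref{econom}. For \eqref{eq:moreMoulds}, assume type $A$ is used fewer than $c_A$ times. Since $s$ contains exactly $c_A$ copies of $A$, at least one copy was pulled from $s$ and discarded by the inner loop; this can only happen after $|b^{-1}(A)|$ has already reached $d_A$, and from that moment on the algorithm never writes $A$ again (neither as a reappearance nor as a fresh pull, because in both cases the check $|b^{-1}(A)|=d_A$ diverts into the inner loop). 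Because the discard itself must occur before $s$ is exhausted, it happens at some step $i\le e$, so the last appearance of $A$ in $b$ lies strictly before $e$.

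For the \emph{only-if}-direction, I would build $s$ explicitly from $b$. Start with the short injection sequence $s_0=(b(i_1),\ldots,b(i_D))$ and append to it, in any order, $c_A-k_A$ copies of each type $A$ that is used only $k_A<c_A$ times in $b$; the result is then a valid injection sequence. Running Algorithm~\ref{algo:beltAssignment} on $s$ reproduces $b$ by a position-by-position induction: at each $i<e$ the algorithm either places the reappearance prescribed by $b(i-N)$, or, at an injection position of $b$, diverts into the inner loop and pulls the next entry of $s_0$, which by construction equals $b(i)$; at each $i\ge e$ the prefix $s_0$ has already been consumed, so only padded moulds remain, all of which belong to types whose demand is already met (because \eqref{eq:moreMoulds} forbids such types from appearing at or after $e$ in $b$), so every padded mould is silently discarded, $s$ is eventually exhausted, and $b(i)=\mpty$ is produced, matching $b$.

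The main obstacle, I expect, lies in the bookkeeping of the induction in the \emph{only-if}-direction. At each injection position of $b$ one needs to argue that the algorithm really diverts into the inner loop instead of simply copying $b(i-N)$; this relies on condition~(ii) in the definition of a belt assignment, which implies that whenever $b(i-N)\neq b(i)$, the type $b(i-N)$ must already have met its demand. The other delicate point is guaranteeing that no padded mould is ever placed in a slot, and this is precisely what \eqref{eq:moreMoulds} provides---it is the property engineered to block exactly the case that would otherwise break the induction.
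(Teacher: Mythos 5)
The paper states Observation~\ref{obs:correspondingBeltAssignments} without proof, so there is no official argument to compare against; judged on its own merits, your overall strategy (unwinding Algorithm~\ref{algo:beltAssignment} for the if-direction, and padding the short injection sequence up to full capacity for the only-if-direction) is the natural one, and your identification of condition~(ii) and of \eqref{eq:moreMoulds} as the two load-bearing facts is accurate. The if-direction is essentially correct; you do silently assume that every copy of a deficient type is actually pulled from $s$, but in the complementary case the algorithm terminates before exhausting $s$, no processed slot is empty, and \eqref{eq:moreMoulds} holds vacuously, so nothing is lost.

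The genuine gap is in your treatment of the positions $i\ge e$ in the only-if-direction. You claim that there the algorithm ``produces $b(i)=\mpty$, matching $b$''. But a belt assignment satisfying \eqref{econom} and \eqref{eq:moreMoulds} need not be empty beyond its first empty slot $e$: those properties only forbid \emph{injections} and appearances of \emph{under-capacitated} types at or after $e$, whereas a type that uses all $c_A$ of its moulds may keep reappearing for many rounds after $e$. This is in fact the generic situation --- one type finishes, its slots go empty, the others keep circulating (cf.\ the load profiles in Section~\ref{testsec}) --- so as written your induction would fail on almost every instance of interest. The fix is to keep running the same case distinction beyond $e$ that you use before $e$: if $b(i)=A\neq\mpty$ for $i\ge e$, then by \eqref{econom} this occurrence is not an injection, so $b(i-N)=A$ with demand not yet met, and the algorithm copies it without touching $s$; only when $b(i)=\mpty$ does the algorithm enter the inner loop, where (as you correctly argue) every remaining entry of $s$ is a padded mould of a type whose demand was already met before $e$, so it is discarded and $\mpty$ results. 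With that correction the position-by-position induction closes.
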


Belt assignments that are constructed by injection sequences even provide an approximation guarantee:

\begin{observation}
\label{obs:2factor}
The corresponding belt assignment of an injection sequence is a $2$-factor approximation for the \PName\ problem.
\end{observation}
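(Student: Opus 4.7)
The strategy is to establish the upper bound
\[
M \;\le\; \sum_{A\in\mathcal T} d_A \;+\; r^*\cdot N \;+\; N - 1
\]
on the makespan of the belt assignment $b$ coming from any injection sequence, and then combine it with the two lower bounds of Lemma~\ref{lem:lowerBound} to deduce $M\le 2M^*$.

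The first step is structural. By Observation~\ref{obs:correspondingBeltAssignments}, $b$ satisfies \eqref{econom} and \eqref{eq:moreMoulds}. I would use \eqref{econom} to argue that the injection sequence of Algorithm~\ref{algo:beltAssignment} is exhausted no later than the first empty slot in $b$, so from that point on only continuations of already-injected moulds can occur. Together with the period-$N$ structure of the belt, this forces that on each residue class $r\in\{0,\dots,N-1\}$ the non-empty entries of $b$ at positions $r,\,r+N,\,r+2N,\dots$ form an initial block of consecutive rounds $1,\dots,k_r$: once a residue goes empty, it stays empty forever. Setting $K=\max_r k_r$ and letting $\tilde r$ be a residue attaining the maximum, the last non-empty position of $b$ is $L=\tilde r+(K-1)N$, and hence $M\le L+N\le KN+N-1$.

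The heart of the proof is the inequality $KN\le r^*\cdot N+\sum_{A}d_A$, which is equivalent (up to the additive $N-1$) to showing that the number $E$ of empty slots in $[0,L]$ satisfies $E\le r^*\cdot N$. Per residue the contribution is $K-k_r$ (with a $-1$ correction for residues past $\tilde r$), so the aim is $\sum_r(K-k_r)\le r^*N+O(1)$. I would prove this by analysing the residue $\tilde r$ and the sequence of types $A_1,\dots,A_\ell$ it successively hosts, each being replaced by the next the very round its demand is met (this is the crucial greedy step built into Algorithm~\ref{algo:beltAssignment}). Writing $T_{A_i}$ for the round at which $A_i$'s demand is met, one has $K=T_{A_\ell}$ and $m_i=T_{A_i}-T_{A_{i-1}}$; the plan is to bound the final segment $m_\ell$ by $r^*$ using that $A_\ell$ uses at most $c_{A_\ell}$ moulds and so needs at most $\lceil d_{A_\ell}/c_{A_\ell}\rceil\le r^*$ rounds beyond the point at which its final mould is in place, while the earlier segments account for time during which other residues were also productively occupied and therefore get absorbed into $\sum_{A}d_A$.

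With the displayed makespan bound at hand, combining it with $M^*\ge\sum_A d_A+N-1$ and with $M^*\ge r^*\cdot N+\sigma_{A^*}-1\ge r^*\cdot N$ (for $A^*$ realising the maximum in $r^*$, one has $\sigma_{A^*}\ge 1$ by definition of $r^*$) yields
\[
2M^* \;\ge\; \sum_{A}d_A + r^*\cdot N + N - 1 \;\ge\; M,
\]
as required. The main obstacle is precisely the upper bound $KN\le r^*N+\sum_A d_A$: residue $\tilde r$ may, in principle, outlast the other residues by many rounds if the moulds of $A_\ell$ are scattered late in the injection sequence, and one has to leverage properties \eqref{econom} and \eqref{eq:moreMoulds} carefully to rule out that this delay accumulates beyond $r^*\cdot N$ empty slots across the belt.
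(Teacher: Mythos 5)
Your overall skeleton is sound and in fact matches the paper's: you aim for the upper bound $M\le \sum_{A}d_A+r^*N+N-1$ (the paper obtains the slightly stronger $\bigl(\lceil d_{A^*}/c_{A^*}\rceil-1\bigr)N+\sum_A d_A+N-1$), and your combination with the two parts of Lemma~\ref{lem:lowerBound} --- including the remark that $\sigma_{A^*}\ge 1$ for a maximiser $A^*$ of $\lceil d_A/c_A\rceil$, whence $M^*\ge r^*N$ --- is correct. The structural facts you invoke (all injections precede all empty slots, so each residue class modulo $N$ is non-empty on an initial block of rounds) are also correct consequences of \eqref{econom}.

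The gap is exactly where you flag it: the inequality $KN\le r^*N+\sum_A d_A$, equivalently that the number of empty slots preceding the last non-empty position is at most $r^*N$, is never proved, and the route you sketch does not close. Bounding the final segment $m_\ell$ at the longest-lived residue $\tilde r$ by $r^*$ controls only that one residue; it says nothing about how many rounds earlier the \emph{other} residues went empty, and the claim that the ``earlier segments'' get absorbed into $\sum_A d_A$ is precisely the unproven assertion that no residue is empty before $A_{\ell-1}$ finishes --- which is not evident, since another type may finish and vacate its slots while the chain at $\tilde r$ is still running. The missing idea, which is the paper's short argument, is to anchor at the \emph{first} empty slot $e$: every slot before $e$ is productive, so $e\le\sum_A d_A-1$; by \eqref{eq:moreMoulds} the last-finishing type $A^*=b(M-N)$ has all $c_{A^*}$ of its moulds injected before step $e$, so at most $d_{A^*}-c_{A^*}$ appearances of $A^*$ remain after $e$ while every subsequent round contributes exactly $c_{A^*}$ of them; hence $(M-N)-e\le\bigl\lceil (d_{A^*}-c_{A^*})/c_{A^*}\bigr\rceil N\le (r^*-1)N$. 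This single bound controls all residues at once and immediately yields the desired upper bound on $M$; I recommend replacing the per-residue chain analysis by it.
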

\begin{proof}
Let $s$ be an injection sequence and $M(s)$ be the makespan of its corresponding belt assignment $b$.
If $M(s)=\sum_{A \in \mathcal{T}} d_A + N - 1$ then, by Lemma~\ref{lem:lowerBound},
 $b$ is an optimal belt assignment. 
Otherwise there is an empty slot $e$ with $e \leq \sum_{A \in \mathcal{T}} d_A -1$.
We note that, by Observation~\ref{obs:correspondingBeltAssignments}, the assignment $b$ satisfies~\eqref{econom} 
and~\eqref{eq:moreMoulds}.

Let $A^*$ be the type that finishes last, i.e.\ $A^*:= b(M(s)-N)$. 
By~\eqref{eq:moreMoulds}, all of the $c_{A^*}$ moulds of type $A^*$ have already been injected before step $e$. 
Each injection requires an appearance of $A^*$. 
Thus, there are at most $d_{A^*}-c_{A^*}$ many appearance of $A^*$ after step $e$.
Moreover, since all moulds are injected, any $N$ consecutive steps between $e$ and $M(s)-N$ contain exactly $c_{A^*}$ appearances of $A^*$.
This provides an upper bound for the number of steps after $e$:
 $$\Big(M(s)-N\Big)-e \leq \left\lceil \frac{d_{A^*}-c_{A^*}}{c_{A^*}} \right\rceil N$$
Using the bound on $e$, we get
$$M(s)\leq \left\lceil \frac{d_{A^*}-c_{A^*}}{c_{A^*}} \right\rceil N + e + N
\leq \left(\left\lceil \frac{d_{A^*}}{c_{A^*}} \right\rceil-1\right)N +\sum_{A \in \mathcal{T}}  d_A + N-1 .$$
By Lemma~\ref{lem:lowerBound}, this is at most twice the optimal makespan.
\end{proof}

Observation \ref{obs:2factor} shows that it is easy to provide an approximation guarantee for \PName . 
However, a makespan that is twice as large as necessary would be a inacceptable in practice. 
The algorithms that are presented in the next section 
typically produce better results than this worst case guarantee.

\section{Algorithms}\label{algosec}

We formulate four simple algorithms for \PName . We have observed that injection sequences determine
belt assignments, that is, determine a run of the production line. Injection sequences, 
moreover, are easy to generate.
In our first algorithm that we call \emph{Simple Random Algorithm},
or \SR\ for short, we generate uniformly at random  injection sequences until some stopping criterion
is satisfied (maximum number of sequences or allotted running time reached).
The best sequence is returned.

\begin{algorithm}[htb]
\emtext{input:} \PName-instance
\begin{algorithmic}[1]
\State Let $s$ be an arbitrary injection sequence that contains precisely $c_A$ moulds of every type $A\in\mathcal T$ 
\State Choose permutations of $s$ uniformly at random until stopping criterion is satisfied.
\State Return sequence with smallest makespan.
\end{algorithmic}
\caption{Simple Random Algorithm (\SR)}
\end{algorithm}

For the next algorithm, \emph{Non-uniform Random Algorithm} (\IR), we replace the uniform 
probability distribution by one that is guided by the instance. Clearly, types of large demand 
require more urgency, as do types with very few moulds, i.e.\ with small capacity $c_A$.
Indeed, for a type with only a single mould available it seems important that we put
the single mould on the belt as early as possible. To reconcile these two aspects, demand and  
capacity, we  give more weight in the probability distribution to types with large 
ratio $d_A/c_A$.

\begin{algorithm}[htb]
\emtext{input:} \PName-instance
\begin{algorithmic}[1]
\Function{SequenceGenerator}{}
\State $\emtext{total}=\sum_{B \in \mathcal{T}} \frac{d_B}{ c_B}$
\For{ $k=1,\ldots,\sum c_A$}
\State Choose type $A^*$ with probability $P(A)=\frac{d_A}{c_A \cdot \emtext{total}}$  if $A$ \\ \qquad has still at least one mould left and 0 otherwise.
\State $s(k)=A^*$
\If{ this was the last mould of $A^*$}
\State $\emtext{total} = \emtext{total} -\frac{d_{A^*}}{c_{A^*}}$
\EndIf
\EndFor
\State Return $s$
\EndFunction
\State Draw random sequences with \textsc{SequenceGenerator}  until stopping criterion is satisfied.
\State Return sequence with smallest makespan.
\end{algorithmic}\caption{Non-uniform Random Algorithm (\IR)}
\end{algorithm} 

Next, we combine the two algorithms with a simple local search routine, see Algorithm~\ref{localgo}. 
We fix two parameters \textsc{Steps} and \textsc{Swaps}. Given an initial injection sequence, we then 
perform \textsc{Steps} many steps. In each step, we pick among \textsc{Swaps} many random swaps 
of two moulds in the injection sequence the one that results in the smallest makespan. (Note that
this makespan may be worse than the one of the initial sequence.)  
Combining \SR\ with local search (\SRL) then amounts to repeatedly generating an injection sequence
uniformly at random followed by a local search. The combination of \IR\ with local search (\IRL)
is obtained in the same way. 

\begin{algorithm}[htb]
\emtext{input:} \PName-instance
\begin{algorithmic}[1]
\Function{Local Search}{initial sequence}
\State Set current sequence to initial sequence.
\For{ $k=1,\ldots, \textsc{Steps}$}
\For{ $t=1,\ldots, \textsc{Swaps}$}
\State Pick $a,b$ in current sequence uniformly at random.
\State Swap $a,b$ in current sequence and compute makespan.
\State Reverse swap.
\EndFor
\State Set current sequence to best sequence in inner loop.
\EndFor
\State Return best sequence among all visited sequences.
\EndFunction
\State Let $s$ be an arbitrary injection sequence that contains precisely $c_A$ moulds of every type $A\in\mathcal T$ 
\While{stopping criterion not satisfied}
\State Choose permutations of $s$ uniformly at random as initial sequence. 
\State Execute \textsc{Local Search}(initial sequence)
\EndWhile
\State Return sequence with smallest makespan among all visited sequences.
\end{algorithmic}\caption{\SRL}\label{localgo}
\end{algorithm}

\section{Evaluation of the algorithms}\label{testsec}


The manufacturing company supplied us with production data covering a period of roughly 6 
months with $349$ individual jobs and a total demand of about $250\,000$.
For each job the data lists demands, capacities (number of available moulds per type), 
start and end time of production as well as
time stamps: every time a mould exits the press its unique identifier, its type and the current time 
are recorded. In particular, the time stamps allow to reconstruct the order in which the moulds
were injected on the conveyor belt. 

An inspection of the data revealed several inconsistencies. 
Sometimes the number of produced items did not quite match the demand.
In other jobs,  more moulds were used than available, at least according to stored mould capacities.
We handled these inconsistencies by taking the number of actually produced items as demand 
and by  increasing the capacities whenever the data showed more moulds were used. 
We also uncovered some aspects that were not captured by our mathematical model:


\begin{itemize}
\item The relative order of two moulds on the belt sometimes changed during a production run.
This is likely caused by the occasional removal of a mould. Indeed, 
sometimes moulds need to be cleaned as 
excess material  tends to accumulate in the moulds.
\item The speed with which the moulds advance depends mildly on 
the total load of the line.
\item The number of slots is not a constant. It is possible to put more than $N=20$ moulds
on the belt. This is, however, normally avoided as it results in substantially more wear and tear 
in a part of the conveyor belt.  
\end{itemize}

To verify whether our model nevertheless fits the data well enough we compare its predictions
to the actual production times. It turns out, however, that the difference between end and start 
time of a job is not  a good measure for the time spent fulfilling the demands 
of the job. In many jobs the time stamp data show extensive time periods during which the line
was  stopped. In some cases this was obviously due to a Sunday or bank holiday
between start and end of the job; in other cases machine errors and repairs are the likely causes. 
To correct for idle times, we identified time periods of ten minutes or more during which no 
time stamp was registered. Subtracting these idle times from the difference between end and start time
of a job yields the \emph{adjusted production time} that we compare to the \emph{computed makespan} of a job: 
the makespan our model returns when input the short injection sequence gleaned from the time stamp 
data (i.e.\ moulds in order of first appearance).

Figure~\ref{fig:validation} shows  adjusted production time versus computed makespan for each job.
While it is not a perfect fit, a clear linear relationship (linear least square fit in red) can be 
observed. 
We take this as  evidence that our model is fairly reasonable.

\begin{figure}[htb]
\begin{center}
\includegraphics[width=0.6\textwidth]{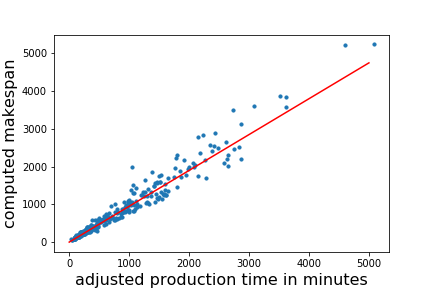}
\end{center}
\caption{Model validation. Each point represents a job. Linear least squares fit in red.}
\label{fig:validation}
\end{figure}


To evaluate the algorithms, we implemented \SR, \IR, \SRL\ and \IRL\ in python and 
executed them on a 2018 standard desktop computer. We set the local search parameters
\textsc{Steps} and \textsc{Swaps} each to~$10$. We furthermore set a time limit
of~$1$~second per job for each algorithm. Every run of an algorithm was repeated~$10$ times.

Ideally, we would like to compare the performance of the different algorithms to 
the optimal makespan, the smallest makespan possible for each job, and 
indeed, it is straightforward to express \PName\ as a mixed integer program. 
We found, however, that at least with open source solvers and on standard hardware
the solvers did not finish in reasonable time. Instead, we use therefore the lower 
bound of Lemma~\ref{lem:lowerBound} as a stand-in for the optimum. 
While it is not hard to construct instances where the lower bound is off by a
factor arbitrarily close to~$2$, this is not a typical occurrence. In fact, 
we will see that, averaged over all instances in the production data, 
the factor between lower bound and the optimum is at most~$1.008$.

We also use the production data to evaluate the algorithms. Specifically, we compare 
the performance of the algorithms to the computed makespan defined earlier: 
the makespan that results from our model if the order of first appearance 
of the moulds is used as injection sequence. That sequence is decided by the workers
that service the production line. It is based on their experience and decided 
on, as far as we could see, mostly in a non-systematic intuitive way. 
In the table below, we denote the corresponding makespan with \human.


The overall test results are listed in Table~\ref{table:overallResult}. It shows 
makespans summed over all instances in the data set, averaged over the ten runs
of each algorithm.
The first row sets the human performance to~100\%, while the second row takes the lower bound of Lemma~\ref{lem:lowerBound}
as~100\%. 
With at most $0.05\%$ standard deviations were very small.

What can we conclude from these results? First, the human performance is actually quite good! 
Compared to the lower bound, the schedules prepared by the service workers leave only 
room for a reduction of at most 8.1\%, which would result in roughly 60 hours additional
production time per month. Second, we see that even our most basic algorithm, \SR, with 
a cumulated makespan of 101.7\% of the lower bound, comes
close to realising these potential savings in production time. \IR, with makespan 101.0\%
of the lower bound, comes closer still. Both algorithms benefit from local search, with \IRL\
saving fewer makespan steps than \SRL, most likely because the performance of \IR\ is already 
very close to the optimum. 
Considering that the lower bound is not tight in general,
we contend  that the performance of \IRL\ cannot be improved much, if at all.

\vspace*{-3pt}

\begin{table}[htb]
\begin{center}
\begin{tabular}{l|ccccc}
overall test result & \human & \SR & \SRL & \IR & \IRL  \\
 \hline
compared to \human   &  (100\%) & \ 93.5\% & \ 93.1\% & \ 92.8\% & \ 92.6\%   \\
compared to lower bound & 108.8\% &  101.7\% & 101.2\% & 101.0\% & 100.8\% 
\end{tabular}
\end{center}
\caption{Overall test result. First row: Cumulated makespans over all jobs, \human\ set to 100\%.
Second row: Cumulated makespans over all jobs, lower bound set to 100\%.}
\label{table:overallResult}
\end{table}

\vspace*{-3pt}

In Figure~\ref{fig:overallSavings} the number of saved steps, the 
difference between \human\ makespan and the algorithmic makespan, is shown in histograms.
For each algorithm the number of saved steps is well distributed among many jobs.
Overall, the savings are not dominated by single outliers.

There are a few outliers, though, with many saved steps. 
It is likely that the poor performance of these extremes is not because of bad planning by the service
workers but due to external issues not captured by our model. 

In a large number of jobs, no algorithm can achieve a better schedule than the \human\ schedule. 
Inspecting  Figure~\ref{fig:overallGaps}, which shows the difference, or \emph{gap}, between 
the achieved makespan and the lower bound, reveals that a good number of the 
jobs was already solved optimally by the service workers. 
For comparison,
we also show the difference between the makespan achieved by \IRL\ and the lower bound.

\begin{figure}[h!]
\begin{center}
\begin{subfigure}[l]{0.45\textwidth}
\includegraphics[width=\textwidth]{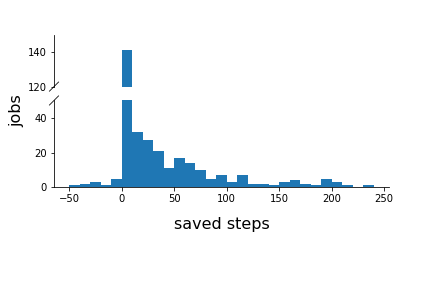}
\subcaption{Saved steps of \SR}
\end{subfigure}
\begin{subfigure}[l]{0.45\textwidth}
\includegraphics[width=\textwidth]{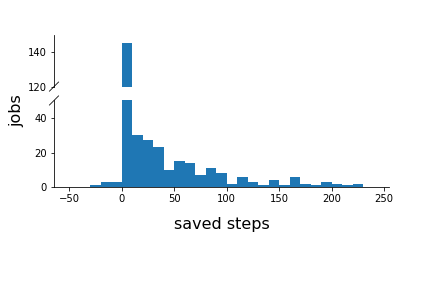}
\subcaption{Saved steps of \SRL}
\end{subfigure}\\
\begin{subfigure}[l]{0.45\textwidth}
\includegraphics[width=\textwidth]{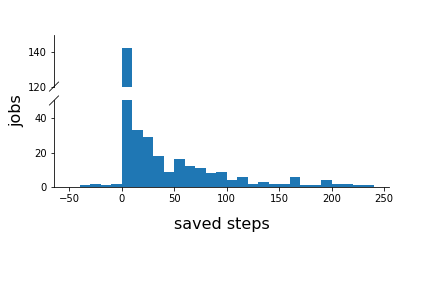}
\subcaption{Saved steps of \IR}
\end{subfigure}
\begin{subfigure}[l]{0.45\textwidth}
\includegraphics[width=\textwidth]{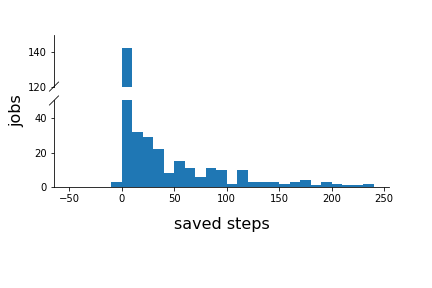}
\subcaption{Saved steps of \IRL}
\end{subfigure}
\end{center}
\caption{Distribution of saved steps.}
\label{fig:overallSavings}
\end{figure}

\begin{figure}[h!]
\begin{center}
\begin{subfigure}[l]{0.45\textwidth}
\includegraphics[width=\textwidth]{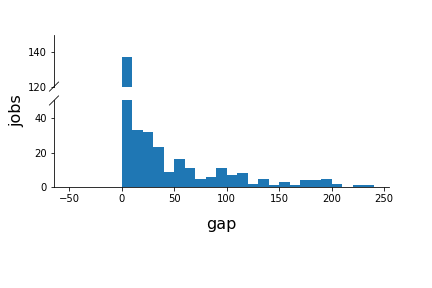}
\subcaption{Gap of \human\ makespan }
\end{subfigure}
\begin{subfigure}[l]{0.45\textwidth}
\includegraphics[width=\textwidth]{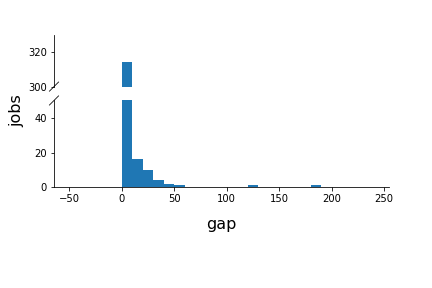}
\subcaption{Gap of  \IRL}
\end{subfigure}
\end{center}
\caption{Distribution of gaps to lower bound.}
\label{fig:overallGaps}
\end{figure}

Finally, we  analyse two typical jobs in more detail. The first, Job~1,
had a total demand of 1311, for which in sum 27 moulds were available. The second 
job, Job~2, had a total demand of~239 for which in total 18 moulds were available. 
Figure~\ref{fig:Loads} shows the \emph{loads}, the number of moulds on the belt,
in each \emph{round} of the algorithm, where we take a round as 
a collection of $N=20$ consecutive steps. 

The manufacturing company reported
that in particular during the end of many jobs the load of the production line was 
very low, that is, that only a few moulds were circulating along the line. 
The load diagram for Job~1 confirms this observation: the load of the \human\ schedule
drops steeply at round 58, resulting in a large makespan. Each of the algorithms,
on the other hand, manages to maintain a higher load for much longer, and thus achieves
a shorter makespan.

Job~2, on the other hand, is an example, where no improvement is possible. 
In total, fewer than 20 moulds are available; and thus  any schedule
that puts all available moulds on the belt is (almost) optimal. 
Still the schedule is inefficient: 
the load does not attain maximum capacity of the conveyor belt.

\begin{figure}
\begin{subfigure}[c]{0.5\textwidth}
\includegraphics[scale=0.4]{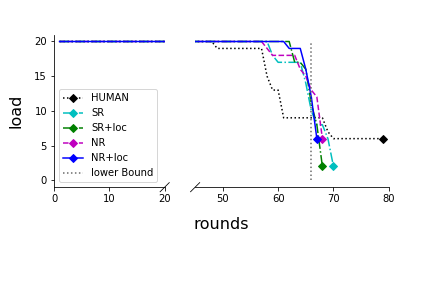}
\subcaption{Job 1}
\end{subfigure}
\begin{subfigure}[c]{0.5\textwidth}
\includegraphics[scale=0.4]{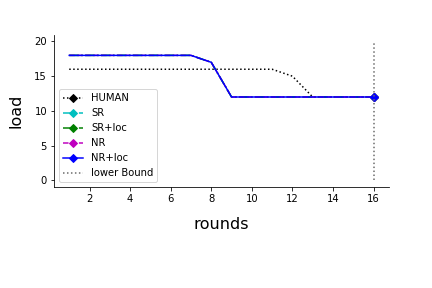}
\subcaption{Job 2}
\end{subfigure}
\caption{Load of conveyor belt.}
\label{fig:Loads}
\end{figure}

\section{Conclusion}

The starting point to this article was the observation of the manufacturing company that
their production line was not always operating at full, or nearly full, capacity. Did we succeed in 
elaborating a strategy to increase the load of the production line? 

We translated the mechanics of the production process into a mathematical optimisation problem
that, while hard from a complexity-theoretic point of view, can be solved very efficiently to close
to optimality. Our algorithms give moderate savings compared to the current, human made, schedules, 
at least when measured within the model. Close inspection of the results indicate that 
a good part of the savings should be realisable in practice. 
In this sense, we  succeeded in increasing the production load.

However, close inspection also reveals
that a substantial part of the jobs are solved optimally but still run inefficiently. 
These are jobs with too few moulds available (such as Job~2 above). 
Consequently, larger productivity gains may only 
be obtained by starting earlier with the optimisation, namely at the planning phase of the jobs, 
when the production demands over a longer period of time are assessed and assembled into jobs.

\bibliography{references}

\providecommand{\bysame}{\leavevmode\hbox to3em{\hrulefill}\thinspace}
\providecommand{\MR}{\relax\ifhmode\unskip\space\fi MR }
\providecommand{\MRhref}[2]{%
  \href{http://www.ams.org/mathscinet-getitem?mr=#1}{#2}
}
\providecommand{\href}[2]{#2}
\begin{thebibliography}{1}

\bibitem{garey2002computers}
M.R. Garey and D.S. Johnson, \emph{Computers and intractability}, vol.~29,
  W.H.\ Freeman New York, 2002.

\bibitem{johnson1954optimal}
S.M. Johnson, \emph{Optimal two-and three-stage production schedules with setup
  times included}, Naval research logistics quarterly \textbf{1} (1954),
  61--68.

\bibitem{laribi2016heuristics}
I.~Laribi, F.~Yalaoui, F.~Belkaid, and Z.~Sari, \emph{Heuristics for solving
  flow shop scheduling problem under resources constraints}, IFAC-PapersOnLine
  \textbf{49} (2016), 1478--1483.

\bibitem{lenstra1977complexity}
J.K. Lenstra, A.H.G. {Rinnooy Kan}, and P.~Brucker, \emph{Complexity of machine
  scheduling problems}, Annals of discrete mathematics, vol.~1, Elsevier, 1977,
  pp.~343--362.

\bibitem{mcnaughton1959scheduling}
R.~McNaughton, \emph{Scheduling with deadlines and loss functions}, Management
  Science \textbf{6} (1959), 1--12.

\bibitem{mori1997genetic}
M.~Mori and C.C. Tseng, \emph{A genetic algorithm for multi-mode resource
  constrained project scheduling problem}, European Journal of Operational
  Research \textbf{100} (1997), 134--141.

\bibitem{stinson1978multiple}
J.P. Stinson, E.W. Davis, and B.M. Khumawala, \emph{Multiple
  resource--constrained scheduling using branch and bound}, AIIE Transactions
  \textbf{10} (1978), 252--259.

\bibitem{xing2000parallel}
W.~Xing and J.~Zhang, \emph{Parallel machine scheduling with splitting jobs},
  Discrete Applied Mathematics \textbf{103} (2000), 259--269.

\end{thebibliography}
\bibliographystyle{amsplain}

\vfill

\small
\vskip2mm plus 1fill
\noindent
Version \today{}
\bigbreak

\noindent
Felix Bock
{\tt <felix.bock@uni-ulm.de>}\\
Henning Bruhn
{\tt <henning.bruhn@uni-ulm.de>}\\
Institut f\"ur Optimierung und Operations Research\\
Universit\"at Ulm\\
Germany\\

\end{document}